\theoremstyle{plain}
  \newtheorem{theorem}{Theorem}[section]
  \newtheorem{lemma}[theorem]{Lemma}
\theoremstyle{definition}
  \newtheorem{example}[theorem]{Example}
  \newtheorem{remark}[theorem]{Remark}
\newenvironment{acknowledgements}{\bigskip\textbf{Acknowledgements.}}{}
\newcommand{\dueto}[1]{\textup{\textbf{(#1) }}}
\newcommand{\nin}{\not\in}
\newcommand{\nequiv}{\not\equiv}
\newcommand{\op}[1]{\ensuremath{\operatorname{#1}}}
\renewcommand{\pmod}[1]{\quad(\op{mod}\,#1)}
\renewcommand{\geq}{\geqslant}
\renewcommand{\leq}{\leqslant}
\begin{document}

\title{Congruences for Fishburn numbers modulo prime powers}

\author{Armin Straub
\thanks{Email: \texttt{astraub@illinois.edu}}\\
Department of Mathematics\\
University of Illinois at Urbana-Champaign}

% \date{\today}
\date{July 16, 2014}

\maketitle

\begin{abstract}
  The Fishburn numbers $\xi (n)$ are defined by the formal power series
  \begin{equation*}
    \sum_{n \geq 0} \xi (n) q^n = \sum_{n \geq 0} \prod_{j = 1}^n
     (1 - (1 - q)^j) .
  \end{equation*}
  Recently, G.~Andrews and J.~Sellers discovered congruences of the form $\xi
  (p m + j) \equiv 0$ modulo $p$, valid for all $m \geq 0$. These
  congruences have then been complemented and generalized to the case of
  $r$-Fishburn numbers by F.~Garvan. In this note, we answer a question of
  Andrews and Sellers regarding an extension of these congruences to the case
  of prime powers. We show that, under a certain condition, all these
  congruences indeed extend to hold modulo prime powers.
\end{abstract}

\section{Introduction and main results}

The Fishburn numbers $\xi (n)$, which are defined by the formal power series
\begin{equation}
  \sum_{n \geq 0} \xi (n) q^n = \sum_{n \geq 0} \prod_{j = 1}^n (1 -
  (1 - q)^j), \label{eq:fishburn}
\end{equation}
count a variety of combinatorial objects, such as interval orders or unlabeled
$( 2 + 2)$-free posets \cite{fishburn70}, upper-triangular matrices with
nonnegative integer entries of total sum $n$ and no zero rows or columns
\cite{dp-fishburn}, \cite{blr-fishburn}, regular linearized chord diagrams
or Stoimenow matchings \cite{stoimenow-fishburn}, ascent sequences
\cite{bcdk-fishburn}, or non-2-neighbor-nesting matchings
\cite{levande-fishburn}. The generating function \eqref{eq:fishburn} is due
to D.~Zagier \cite{zagier-strange}, who used it to derive an asymptotic
expansion for the Fishburn numbers. On the other hand, striking arithmetic
properties of these numbers, such as a connection with (mock) modular forms
\cite{zagier-strange}, \cite{bopr-unimodal} and congruences resembling
those of the partition function \cite{as-fish}, \cite{garvan-fish} have
recently been discovered. This note is concerned with generalizing these
congruences.

In order to state the congruences discovered by G.~Andrews and J.~Sellers,
define, as in \cite{as-fish}, the sets
\begin{equation*}
  S (p) = \{j \in \{0, 1, \ldots, p - 1\}: \hspace{1em} \text{$j \equiv
   \tfrac{1}{2} n (3 n - 1)$ for some $n$} \}
\end{equation*}
of residues modulo $p$ which are pentagonal numbers. The main result of
\cite{as-fish} is the following.

\begin{theorem}
  {\dueto{Andrews and Sellers, \cite{as-fish}}}\label{thm:as}Let $p$ be a
  prime. If $j \in \{1, 2, \ldots, p - 1 - \max S ( p) \}$, then
  \begin{equation}
    \xi ( p m - j) \equiv 0 \pmod{p}
    \label{eq:fishburn:c:as}
  \end{equation}
  for all positive integers $m$.
\end{theorem}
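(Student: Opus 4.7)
The appearance of pentagonal residues $S(p)$ in the statement strongly suggests exploiting Euler's pentagonal number theorem
\begin{equation*}
(q;q)_\infty := \prod_{n\geq 1}(1-q^n) = \sum_{k\in\mathbb{Z}}(-1)^k q^{k(3k-1)/2},
\end{equation*}
and my plan is to deduce the theorem from a mod-$p$ factorization of the form
\begin{equation*}
\sum_{n\geq 0}\xi(n)q^n \equiv (q;q)_\infty\cdot G_p(q^p) \pmod{p}
\end{equation*}
for some auxiliary power series $G_p \in \mathbb{F}_p[[q]]$ depending on $p$.

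Granting the factorization, the congruence follows immediately: comparing coefficients of $q^{pm-j}$ yields
\begin{equation*}
\xi(pm-j) \equiv \sum_{k,\ell} (-1)^k\,[q^\ell]G_p(q) \pmod{p},
\end{equation*}
where the sum ranges over pairs $(k,\ell)\in\mathbb{Z}\times\mathbb{Z}_{\geq 0}$ satisfying $k(3k-1)/2+p\ell=pm-j$. Any such pair forces $k(3k-1)/2 \equiv -j \equiv p-j \pmod p$, placing $p-j$ in $S(p)$; but the hypothesis $1\leq j\leq p-1-\max S(p)$ puts $p-j$ strictly above $\max S(p)$ in $\{1,\ldots,p-1\}$, so no such pair exists and the sum is empty.

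The heart of the argument is establishing the factorization, whose essential input is the Frobenius identity $(1-q)^p \equiv 1-q^p \pmod p$. Writing $n=\ell p+r$ with $0\leq r<p$ and isolating the factors $j=p,2p,\ldots,\ell p$ in $\prod_{j=1}^n(1-(1-q)^j)$, one obtains modulo $p$
\begin{equation*}
\prod_{j=1}^n(1-(1-q)^j) \equiv \prod_{k=1}^\ell (1-(1-q^p)^k)\cdot \prod_{\substack{1\leq j\leq n\\ p\,\nmid\,j}}\bigl(1-(1-q^p)^{\lfloor j/p\rfloor}(1-q)^{j\bmod p}\bigr).
\end{equation*}
Summing over $n$ and reindexing by $(\ell,r)$, one aims to show that the finite inner sums over $r\in\{0,\ldots,p-1\}$ collapse, via a Zagier--Rogers--Fine-type $q$-hypergeometric transformation, into the full Euler product $(q;q)_\infty$ times a residual series in $1-q^p$, which modulo $p$ reduces to a series in $q^p$.

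The main obstacle is precisely this last $q$-series step: extracting $(q;q)_\infty$ from the truncated Pochhammer sums that survive the mod-$p$ reduction. A natural route is to adapt the telescoping argument underlying Zagier's strange identity, which produces the same infinite product from $\sum_n(q;q)_n$ in a closely related setting, and to track the residual $(1-q^p)$-factors so as to identify $G_p$ explicitly; low-order computer algebra verification should help pin down the candidate $G_p$ before attempting a formal proof by induction on the number of complete blocks $\ell$.
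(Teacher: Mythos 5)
Your reduction step is fine as far as it goes, but the factorization it rests on is false, and this is fatal to the approach. If
\begin{equation*}
\sum_{n\ge 0}\xi(n)q^n \equiv (q;q)_\infty\, G_p(q^p) \pmod{p}
\end{equation*}
held, then \emph{every} monomial on the right would have exponent $k(3k-1)/2+p\ell$, whose residue mod $p$ lies in $S(p)$ by definition; hence $\xi(n)\equiv 0\pmod p$ for every $n$ with $n\bmod p\notin S(p)$. Take $p=7$: one has $S(7)=\{0,1,2,5\}$, so the factorization would force $\xi(3)\equiv 0\pmod 7$; but $\xi(3)=5$. (Likewise $\xi(4)=15\equiv 1$ and $\xi(10)=201608\equiv 1$ mod $7$, even though $3,4\notin S(7)$.) This is also why the theorem is stated only for $j\le p-1-\max S(p)$ rather than for all $j$ with $p-j\notin S(p)$: the true mod-$p$ structure is not supported only on residues in $S(p)$.

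The correct structure, used by Andrews--Sellers and in this paper, replaces your $q^i H_i(q^p)$-type dissection of $(q;q)_\infty$ by the dissection $F(q,N)=\sum_{i=0}^{p-1}q^i A_p(N,i,q^p)$ of the \emph{truncated} sum $\sum_{n=0}^{N}(q;q)_n$, together with the key divisibility (Lemma~\ref{lem:alpha}) that $A_p(pn-1,i,q)$ is divisible by $(1-q)^n$ whenever $i\notin S(p)$. After substituting $q\mapsto 1-q$, the terms with $i\notin S(p)$ become $O(q^n)$ and disappear in the limit, and one is left, modulo $p$, with $\sum_{i\in S(p)}(1-q)^i C_i(q^p)$ --- note $(1-q)^i$, not $q^i$, which spreads over the residues $0,1,\dots,i$ and therefore only kills the coefficients of $q^{pm-j}$ with $p-j>\max S(p)$; that last step is a binomial-coefficient computation ($[q^{pm-j}](1-q)^{ap+i}\equiv 0\pmod p$ for $j<p-i$, cf.\ Lemma~\ref{lem:binom0}). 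Your plan as written has no route to this: the telescoping/Rogers--Fine step you defer is not merely the hard part, it is aimed at an identity that cannot be true. A repair would mean abandoning the target $(q;q)_\infty G_p(q^p)$ and instead proving the polynomial divisibility statement for the $A_p(pn-1,i,q)$, which is the actual content of the Andrews--Sellers argument.
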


For primes $p$, which are quadratic nonresidues modulo $23$, the set $\{1, 2,
\ldots, p - 1 - \max S ( p) \}$ is nonempty; thus congruences of the form
\eqref{eq:fishburn:c:as} exist for infinitely many primes (indeed, half of
them). F.~Garvan proved \cite{garvan-fish} that such congruences also exist
for $p = 23$ (in which case $p - 1 - \max S ( p) = 0$) by replacing the set $S
( p)$ in Theorem~\ref{thm:as} with the smaller set $S^{\ast} ( p) = S ( p)
\backslash \{ i_0 \}$, where $i_0 \in \{ 0, 1, \ldots, p - 1 \}$ is such that
$i_0 \equiv - 1 / 24$ modulo $p$. The purpose of this note is to extend all of
these congruences to prime powers, thus answering a question posed by Andrews
and Sellers \cite{as-fish}. The following is a corollary of our main result,
which is stated as Theorem~\ref{thm:cong} below.

\begin{theorem}
  \label{thm:as:x}Let $p \geq 5$ be a prime. If $j \in \{1, 2, \ldots, p
  - 1 - \max S^{\ast} ( p) \}$, then
  \begin{equation*}
    \xi ( p^{\lambda} m - j) \equiv 0 \pmod{p^{\lambda}}
  \end{equation*}
  for all positive integers $m$ and $\lambda$.
\end{theorem}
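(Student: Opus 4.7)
My plan is to deduce Theorem~\ref{thm:as:x} as a direct corollary of the general congruence Theorem~\ref{thm:cong}, which I expect to assert, roughly, that $\xi(n) \equiv 0 \pmod{p^{\lambda}}$ whenever the residue of $n$ modulo $p$ lies outside the set $S^{\ast}(p)$. Granting such a master statement, the corollary reduces to a routine residue calculation.

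Concretely, fix $p \geq 5$, a positive integer $\lambda$, a positive integer $m$, and $j$ in the stated range, and set $n := p^{\lambda} m - j$. Then $n \equiv -j \equiv p - j \pmod{p}$. Since $1 \leq j \leq p - 1 - \max S^{\ast}(p)$, one has
\begin{equation*}
  p - j \;\geq\; p - \bigl(p - 1 - \max S^{\ast}(p)\bigr) \;=\; 1 + \max S^{\ast}(p),
\end{equation*}
so the residue $p - j$ strictly exceeds every element of $S^{\ast}(p)$ and therefore lies outside that set. Applying Theorem~\ref{thm:cong} to this $n$ then yields $\xi(n) \equiv 0 \pmod{p^{\lambda}}$, which is the claim; the positivity $n \geq 1$ needed to invoke the theorem is automatic from $p^{\lambda} m \geq p > p - 1 \geq j$.

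Essentially all of the mathematical difficulty sits upstream in Theorem~\ref{thm:cong}. If I had to prove that theorem from scratch, my plan would be to start from the $q$-series manipulations that Andrews--Sellers and Garvan used to establish the mod-$p$ case---in particular, from identities that, building on Zagier's strange identity, recast $\sum_{n} \xi(n) q^{n}$ in terms of a pentagonal-type series---and then attempt to lift the mod-$p$ identity to a mod-$p^{\lambda}$ identity by expanding the factors $1 - (1-q)^{j}$ $p$-adically and tracking the resulting pentagonal exponents modulo $p^{\lambda}$. The main obstacle, I expect, is controlling the $p$-adic denominators introduced during the lifting and verifying that the single exceptional residue $i_{0} \equiv -1/24 \pmod{p}$ absorbs precisely those denominators, with no \emph{new} exceptional residues appearing when one iterates from modulus $p$ to $p^{\lambda}$---that is, that Garvan's single exclusion at $\lambda = 1$ already suffices uniformly in $\lambda$.
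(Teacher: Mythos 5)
Your overall strategy --- deducing Theorem~\ref{thm:as:x} as the specialization $r=1$, $s=0$ of the master Theorem~\ref{thm:cong} --- is exactly what the paper does. However, there is a genuine gap: you have guessed the wrong statement for Theorem~\ref{thm:cong}. The actual theorem does \emph{not} say unconditionally that $\xi_{r,s}(p^{\lambda}m-j)\equiv 0 \pmod{p^{\lambda}}$ whenever $j \le p-1-\max S^{\ast}(p,r,s)$; without further hypotheses it only allows the larger set $S(p,r,s)$, and the replacement of $S(p,r,s)$ by $S^{\ast}(p,r,s)$ is permitted only under the additional condition \eqref{eq:cong:C}, namely $\operatorname{digit}_1(s-r/24;p)\neq p-1$. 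This hypothesis is not decorative: Examples~\ref{eg:523} and \ref{eg:garvan5} exhibit triples such as $(5,23,0)$ and $(5,-1,0)$ for which the condition fails and the mod-$p$ congruences associated with $S^{\ast}$ genuinely do \emph{not} lift to prime powers (e.g.\ $\xi_{-1}(5^2-1)\equiv 20 \nequiv 0 \pmod{5^2}$). So the ``master statement'' you grant yourself is false as you state it, and your residue computation, while correct as far as it goes, does not complete the deduction.

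What is missing is the short but essential verification that $(p,1,0)$ satisfies \eqref{eq:cong:C} for every prime $p\ge 5$. The paper does this by noting that $(p^2-1)/24\in\mathbb{Z}$ and
\begin{equation*}
  \operatorname{digit}_1\left(-\tfrac{1}{24};p\right)=\left\lfloor \frac{p^2-1}{24p}\right\rfloor=\left\lfloor \frac{p}{24}\right\rfloor < p-1 ,
\end{equation*}
so the obstruction never occurs in the case of the ordinary Fishburn numbers. With that check added, your derivation matches the paper's. Your closing sketch of how one might prove Theorem~\ref{thm:cong} itself correctly identifies the main ingredients (the Andrews--Sellers dissection, Garvan's treatment of the exceptional residue $i_0\equiv -1/24$, and a $p$-adic lifting argument), but it is only a plan; in particular your hope that ``Garvan's single exclusion at $\lambda=1$ already suffices uniformly in $\lambda$'' is precisely what condition \eqref{eq:cong:C} governs, and it is not automatic.
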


\begin{example}
  \label{eg:1}Theorem~\ref{thm:as:x} includes the congruences
  \begin{eqnarray*}
    \xi ( 5^{\lambda} m - 1) \equiv \xi ( 5^{\lambda} m - 2) & \equiv & 0
    \pmod{5^{\lambda}},\\
    \xi ( 7^{\lambda} m - 1) & \equiv & 0 \pmod{7^{\lambda}},\\
    \xi ( 11^{\lambda} m - 1) \equiv \xi ( 11^{\lambda} m - 2) \equiv \xi (
    11^{\lambda} m - 3) & \equiv & 0 \pmod{11^{\lambda}},\\
    \xi ( 17^{\lambda} m - 1) & \equiv & 0 \pmod{17^{\lambda}},\\
    \xi ( 19^{\lambda} m - 1) \equiv \xi ( 19^{\lambda} m - 2) & \equiv & 0
    \pmod{19^{\lambda}},
  \end{eqnarray*}
  which generalize the ones obtained in \cite{as-fish}, as well as
  \begin{equation*}
    \xi ( 23^{\lambda} m - j) \equiv 0 \pmod{23^{\lambda}}
  \end{equation*}
  for $j \in \{ 1, 2, \ldots, 5 \}$, which generalize the additional
  congruences of \cite{garvan-fish}.
\end{example}

The congruences for the Fishburn numbers $\xi ( n)$ provided by
Theorem~\ref{thm:as:x} are a special case of a more general result concerning
the numbers $\xi_{r, s} ( n)$ defined by
\begin{equation}
  \sum_{n \geq 0} \xi_{r, s} (n) q^n = ( 1 - q)^s \sum_{n \geq 0}
  \prod_{j = 1}^n (1 - (1 - q)^{r j}) = ( 1 - q)^s F ( ( 1 - q)^r),
  \label{eq:fishburn:rs}
\end{equation}
where $F ( q)$ is defined in \eqref{eq:F}. Clearly, the case $r = 1$, $s = 0$
reduces to the Fishburn numbers $\xi ( n) = \xi_{1, 0} ( n)$. The case $r = -
1$ has combinatorial significance as well. The numbers $( - 1)^n \xi_{- 1, 0}
( n)$ count, for instance, upper-triangular matrices with $0$-1-entries of
total sum $n$ and no zero rows or columns \cite{aj-fishburn},
\cite{blr-fishburn}. In the case $s = 0$, we obtain the $r$-Fishburn numbers
$\xi_r ( n) = \xi_{r, 0} ( n)$ introduced by Garvan \cite{garvan-fish}. Note
that, for positive integers $s$, the numbers
\begin{equation*}
  \xi_{r, s} ( n) = \sum_{j = 0}^s \binom{s}{j} ( - 1)^j \xi_r ( n - j)
\end{equation*}
are linear combinations of $r$-Fishburn numbers. As mentioned in
\cite{garvan-fish}, congruences satisfied by such combinations, such as
\begin{equation}
  \xi ( 5 m + 2) - 2 \xi ( 5 m + 1) \equiv 0 \pmod{5},
  \label{eq:gr5}
\end{equation}
have been first observed by S.~Garthwaite and R.~Rhoades. In order to state
the main result of \cite{garvan-fish}, let us introduce the sets
\begin{equation*}
  S (p, r, s) = \{j \in \{0, 1, \ldots, p - 1\}: \hspace{1em} \text{$j - s
   \equiv \tfrac{1}{2} r n (3 n - 1)$ for some $n$} \}
\end{equation*}
as well as, for $p \geq 5$, the smaller sets $S^{\ast} (p, r, s) = S (p,
r, s) \backslash \{ i_0 \}$, where $i_0 \in \{ 1, 2, \ldots, p - 1 \}$ is such
that $i_0 - s \equiv - r / 24$ modulo $p$. In other words, modulo $p$, the
sets $S^{\ast} (p, r, s) \equiv r S^{\ast} ( p) + s$ are linear translations
of the sets $S^{\ast} ( p)$.

\begin{theorem}
  {\dueto{Garvan, \cite{garvan-fish}}}\label{thm:g}Let $p \geq 5$ be a
  prime, $r$ an integer such that $p \nmid r$, and $s \in \{ 0, 1, \ldots, p -
  1 \}$. If $j \in \{1, 2, \ldots, p - 1 - \max S^{\ast} ( p, r, s) \}$, then
  \begin{equation}
    \xi_{r, s} ( p m - j) \equiv 0 \pmod{p}
    \label{eq:fishburn:c:g}
  \end{equation}
  for all positive integers $m$.
\end{theorem}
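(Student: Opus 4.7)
My plan is to deduce the theorem from a stronger support statement: that modulo $p$, the generating function $\sum_n \xi_{r,s}(n) q^n$ has nonzero coefficients only at exponents $n$ with $n \bmod p \in S^*(p,r,s)$. Granted this, for $j \in \{1,\ldots,p-1-\max S^*(p,r,s)\}$ the residue of $pm - j$ modulo $p$ equals $p-j > \max S^*(p,r,s)$, which lies outside $S^*(p,r,s)$; hence $\xi_{r,s}(pm-j) \equiv 0 \pmod p$.

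To prove the support statement, I would combine the defining identity
\begin{equation*}
\sum_{n \geq 0} \xi_{r,s}(n) q^n = (1-q)^s F\bigl((1-q)^r\bigr)
\end{equation*}
with the Frobenius congruence $(1-q)^{pk} \equiv (1-q^p)^k \pmod p$ and with a pentagonal-number-type congruence for $F(q)$ itself modulo $p$. The candidate for the latter comes from Zagier's ``strange identity'' formally relating $F(q)$ to $-\tfrac{1}{2} \sum_{n \geq 1} n \chi_{12}(n) q^{(n^2-1)/24}$, combined with the pentagonal-number form of $\eta$, namely $\eta(\tau) = q^{1/24}(q;q)_\infty = \sum_n \chi_{12}(n) q^{n^2/24}$. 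Together, after clearing the $q^{1/24}$ shift, these should give a congruence of the shape
\begin{equation*}
(q;q)_\infty \bigl(2 F(q) + 1\bigr) \equiv \Theta(q) \pmod p,
\end{equation*}
with $\Theta(q)$ an explicit pentagonal theta series. Substituting $q \mapsto (1-q)^r$, multiplying by $(1-q)^s$, and applying Frobenius to replace $(1-q)^{pk}$ by $(1-q^p)^k$ should then yield a congruence whose exponents modulo $p$ are exactly the elements of $S(p,r,s)$. The exceptional residue $i_0 \equiv s - r/24 \pmod p$ corresponds to the boundary term $n = \pm 1$ on the theta side, whose exponent $(n^2-1)/24 = 0$ is cancelled by the ``$+1$'' on the left, giving the refinement from $S(p,r,s)$ to $S^*(p,r,s)$.

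The main technical obstacle is producing the congruence for $F(q)$ modulo $p$ as an honest identity of formal power series. Zagier's strange identity involves fractional exponents and is only an equality in a modular/asymptotic sense, so turning it into a concrete congruence requires care. I would attempt this either via a direct $q$-series argument, multiplying by $(q;q)_\infty$ to clear the fractional-exponent issue and then expanding $\prod_{j=1}^n (1-q^j)$ via the pentagonal number theorem, or by appealing to the (mock) modular-form structure of $F$ referenced in the introduction and extracting an $\ell$-adic modular-form congruence that specializes appropriately. Once the congruence for $F$ is in place, the remaining work, namely the substitution $q \mapsto (1-q)^r$, the Frobenius reduction, and the identification of residues with $S^*(p,r,s)$, is essentially bookkeeping.
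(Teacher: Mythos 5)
Your reduction rests on a support claim that is false as stated: it is not true that, modulo $p$, the series $\sum_n \xi_{r,s}(n)q^n$ is supported on exponents whose residue lies in $S^{\ast}(p,r,s)$. Take $p=5$, $r=1$, $s=0$: then $S^{\ast}(5,1,0)=\{0,2\}$, yet $\xi(1)=1$ and $\xi(6)=217\equiv 2\pmod{5}$, both at exponents $\equiv 1\pmod{5}$. What is actually true (and all the theorem needs) is that the support is confined to residues at most $\max S^{\ast}(p,r,s)$. The reason your stronger claim fails is visible in your own final step: after substituting $q\mapsto(1-q)^r$ and multiplying by $(1-q)^s$, each term of the theta side contributes a binomial $(1-q)^{pa+i'}$ with $i'\in S^{\ast}(p,r,s)$, and by Lucas/Kummer its expansion is supported modulo $p$ on \emph{all} exponents whose last $p$-adic digit is at most $i'$, not only on exponents $\equiv i'\pmod{p}$. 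This is exactly Lemma~\ref{lem:binom0} of the paper with $\lambda=1$, and it is where the inequality $j\leq p-1-\max S^{\ast}(p,r,s)$ comes from; the ``bookkeeping'' you defer is therefore not optional, and the intermediate goal must be restated before the deduction goes through.

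The second, larger gap is the one you flag but do not close: $F(q)=\sum_n(q;q)_n$ is not a formal power series in $q$ (already its constant term is the divergent sum $\sum_n 1$), so the congruence $(q;q)_\infty(2F(q)+1)\equiv\Theta(q)\pmod{p}$ has no meaning without a regularization, and producing a rigorous substitute is precisely the hard content of Andrews--Sellers and Garvan. The paper does not reprove Theorem~\ref{thm:g}; it quotes it from \cite{garvan-fish}, and its own machinery (the proof of Theorem~\ref{thm:cong} specialized to $\lambda=1$) recovers it by working with the truncations $F(q,N)$, the dissection \eqref{eq:Ap}, and the divisibility statements of Lemmas~\ref{lem:alpha} and~\ref{lem:alpha24}, which are exactly the rigorous encoding of the pentagonal-number structure you are trying to extract from the strange identity. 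Note also that the exceptional residue $i_0$ is not handled by a cancellation against the ``$+1$'' at $n=\pm1$: the relevant terms of the half-derivative series $\sum_m m\chi_{12}(m)q^{(m^2-1)/24}$ are those with $p\mid m$, which therefore carry an explicit factor of $p$ --- this is the factor $p$ in front of $F(q^p,pn-1)$ in Lemma~\ref{lem:alpha24}, and it is what kills that term modulo $p$. As written, your proposal identifies the right circle of ideas (it is essentially the shape of Garvan's argument) but states a false intermediate claim and leaves unproved the identity on which everything depends.
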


Our main result generalizes these congruences to prime powers. However, as is
illustrated by Examples~\ref{eg:523} and \ref{eg:garvan5} below, not all of
these congruences extend to congruences modulo $p^{\lambda}$, so that some
restriction on $p$, $r$ and $s$ is necessary. Recall that every rational
number $n$ has a unique $p$-adic expansion
\begin{equation}
  n = \sum_{k = \nu_p ( n)}^{\infty} n_k p^k \label{eq:padic}
\end{equation}
where $n_k \in \{ 0, 1, \ldots, p - 1 \}$ and $\nu_p ( n)$ is the $p$-adic
valuation of $n$. We write $\operatorname{digit}_k ( n ; p) = n_k$. The following is
our main result.

\begin{theorem}
  \label{thm:cong}Let $p$ be a prime, and $r, s$ integers such that $p \nmid
  r$. If $j \in \{1, 2, \ldots, p - 1 - \max S ( p, r, s) \}$, then
  \begin{equation*}
    \xi_{r, s} ( p^{\lambda} m - j) \equiv 0 \pmod{p^{\lambda}}
  \end{equation*}
  for all positive integers $m$ and $\lambda$. Moreover, if $p \geq 5$
  and the triple $( p, r, s)$ satisfies
  \begin{equation}
    \operatorname{digit}_1 ( s - r / 24 ; p) \neq p - 1, \label{eq:cong:C}
  \end{equation}
  then the set $S ( p, r, s)$ may be replaced with $S^{\ast} ( p, r, s)$.
\end{theorem}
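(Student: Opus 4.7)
The plan is to establish the theorem by extracting coefficients from a congruence modulo $p^\lambda$ for the generating function $\sum_n \xi_{r,s}(n) q^n = (1-q)^s F((1-q)^r)$ whose $q$-exponents, modulo $p$, are controlled by the pentagonal residues $S(p,r,s)$. Concretely, the goal is a congruence of the shape
\begin{equation*}
(1-q)^s F((1-q)^r) \equiv \sum_{n \in \mathbb{Z}} (-1)^n q^{s + r n(3n-1)/2}\, G_n(q^{p^{\lambda-1}}) \pmod{p^\lambda},
\end{equation*}
with each $G_n(q)$ a formal power series. Any coefficient of $q^{p^\lambda m - j}$ that contributes must have $-j$ congruent modulo $p$ to $s + r n(3n-1)/2$ for some $n$; equivalently, $p-j$ must lie in $S(p,r,s)$. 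Under the hypothesis $j \in \{1,\ldots,p-1-\max S(p,r,s)\}$ this is impossible, and the coefficient vanishes modulo $p^\lambda$.

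The first and main step is to produce the above expansion. The Andrews--Sellers and Garvan proofs treat the case $\lambda = 1$ using Fermat's little theorem $(1-q)^p \equiv 1-q^p \pmod{p}$ to convert the defining sum $\sum_{n\geq 0} \prod_{j=1}^{n}(1-(1-q)^{rj})$ into an Euler-pentagonal-type sum. For $\lambda \geq 2$, the lift uses the classical congruence $(1-q)^{p^\lambda} \equiv (1-q^p)^{p^{\lambda-1}} \pmod{p^\lambda}$, applied together with a truncation argument: since $\prod_{j=1}^n(1-(1-q)^{rj})$ has $q$-order at least $n$, the infinite sum defining $F$ can be replaced by a finite sum modulo any prescribed power of $q$. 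Careful bookkeeping of the remainder terms, combined with an application of Euler's pentagonal number theorem to the resulting product, should yield the displayed expansion.

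Once the pentagonal-type expansion is secured, coefficient extraction is immediate and gives the first (unstarred) statement of the theorem. For the strengthening to $S^{\ast}(p,r,s)$ under condition \eqref{eq:cong:C}, one observes that the exceptional residue $i_0 \equiv s - r/24 \pmod{p}$ arises from the normalization prefactor $q^{-r/24}$ implicit in the eta-theoretic form of the pentagonal identity, together with the $n = 0$ term. The digit condition $\operatorname{digit}_1(s - r/24; p) \neq p-1$ precludes a $p$-adic carry from disrupting the cancellation in this single residue class, and permits the removal of $i_0$ from the obstruction set.

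The principal obstacle is Step 1 --- extending the mod-$p$ pentagonal expansion to mod-$p^\lambda$ while retaining complete control over the error terms in the repeated applications of the lifting congruence. One must also confirm that condition \eqref{eq:cong:C} is exactly the hypothesis needed for the $S^{\ast}$-refinement; the examples alluded to in the introduction suggest that, without it, the strengthening genuinely fails, and pinning down this dichotomy is the delicate part of the argument.
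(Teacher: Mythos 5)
There is a genuine gap, and it sits at the very first step. The pentagonal-type expansion you propose,
\begin{equation*}
(1-q)^s F((1-q)^r) \equiv \sum_{n \in \mathbb{Z}} (-1)^n q^{s + r n(3n-1)/2}\, G_n(q^{p^{\lambda-1}}) \pmod{p^\lambda},
\end{equation*}
is false: it asserts that the series is supported, modulo $p$, on the residue classes in $S(p,r,s)$ alone, which would imply $\xi(n) \equiv 0 \pmod{p}$ whenever $n \bmod p \notin S(p)$. For $p=7$ one has $S(7)=\{0,1,2,5\}$, so your expansion would force $\xi(3) \equiv 0 \pmod 7$; but $\xi(3)=5$. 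The correct structure, coming from the Andrews--Sellers dissection $F(q,N)=\sum_i q^i A_p(N,i,q^p)$ evaluated at $q \mapsto (1-q)^r$, has prefactors $(1-q)^{ri+s}$ rather than $q^{ri+s}$: each surviving term is a $\mathbb{Z}$-linear combination of $(1-q)^{ap+i'}$ with $i' \equiv ri+s \pmod p$, whose exponents spread over \emph{all} residues $0,1,\dots$ up to $i'$ modulo $p$, not just the single residue $i'$. The congruence for $j \le p-1-\max S(p,r,s)$ holds because $p-j$ exceeds every such $i'$, not because $p-j$ avoids $S(p,r,s)$. (Your expansion is also vacuous at $\lambda=1$, since $G_n(q^{p^0})$ is unrestricted, so it cannot even recover Garvan's theorem.)

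Beyond this, the proposal misses the actual engine of the prime-power extension, which is not a lift of the mod-$p$ identity via $(1-q)^{p^\lambda}\equiv(1-q^p)^{p^{\lambda-1}}$, but Kummer's theorem on carries in base-$p$ addition. The paper keeps the exact polynomial identities of Andrews--Sellers and Garvan for $A_p(pn-1,i,q)$ and reduces everything to showing $\binom{ap+i'}{p^{\lambda}m-j}\equiv 0 \pmod{p^{\lambda}}$: since $p^{\lambda}m-j$ has digits $p-j, p-1,\dots,p-1$, a single carry in the lowest place (forced by $p-j>i'$) propagates through $\lambda-1$ maximal digits, producing $\lambda$ carries. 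A companion estimate (Lemma~\ref{lem:bigO}) controls the discarded terms $(1-(1-q)^{rp})^n$ modulo $p^{\lambda}$. The same carry mechanism is exactly what produces condition \eqref{eq:cong:C}: the exceptional index $i_0$ contributes $(\tfrac{12}{p})\,p\,(1-q)^{ri_0+s+rp\lfloor p/24\rfloor}F((1-q)^{rp^2})$, the explicit factor $p$ reduces the task to a congruence modulo $p^{\lambda-1}$, and that congruence requires one carry in the two lowest digits of $\binom{s-r/24}{p^2-j}$, i.e.\ $\operatorname{digit}_1(s-r/24;p)\neq p-1$. Your remark that the digit condition ``precludes a $p$-adic carry'' gestures at this but supplies no argument, and without the Kummer analysis neither the main congruence nor the role of \eqref{eq:cong:C} can be established along the lines you describe.
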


Note that, if $r = 1$ and $s = 0$, then condition \eqref{eq:cong:C} is
satisfied for all primes $p \geq 5$, as can be seen from $( p^2 - 1) / 24
\in \mathbb{Z}$ and
\begin{equation*}
  \operatorname{digit}_1 ( - 1 / 24 ; p) = \left\lfloor \frac{p^2 - 1}{24 p}
   \right\rfloor = \left\lfloor \frac{p}{24} \right\rfloor < p - 1.
\end{equation*}
Hence, Theorem~\ref{thm:as:x} is a corollary of Theorem~\ref{thm:cong}.

Our proof of Theorem~\ref{thm:cong} is given in Section~\ref{sec:proof}. It
relies crucially on results of Andrews, Sellers and Garvan, which are recalled
in Section~\ref{sec:prelim}, as well as careful applications of Kummer's
theorem on $p$-adic valuations of binomial coefficients. For most of the
remainder of this introduction, we illustrate Theorem~\ref{thm:cong} with a
number of examples.

\begin{example}
  \label{eg:gr}In the case $p = 5$ and $r = 1$, Theorem~\ref{thm:cong} shows
  that, in addition to
  \begin{equation*}
    \xi ( 5^{\lambda} m - 1) \equiv \xi ( 5^{\lambda} m - 2) \equiv 0
     \pmod{5^{\lambda}},
  \end{equation*}
  which were already observed in Example \ref{eg:1}, we have the congruences
  \begin{equation*}
    \xi_{1, 3} ( 5^{\lambda} m - 1) \equiv 0 \pmod{5^{\lambda}} .
  \end{equation*}
  Combining these with $\xi_{1, 3} ( n) = \xi ( n) - 3 \xi ( n - 1) + 3 \xi (
  n - 2) - \xi ( n - 3)$, we conclude that
  \begin{equation*}
    \xi ( 5^{\lambda} m - 3) - 2 \xi ( 5^{\lambda} m - 4) \equiv 0
     \pmod{5^{\lambda}},
  \end{equation*}
  generalizing the congruence \eqref{eq:gr5}, which was observed by Garthwaite
  and Rhoades.
\end{example}

\begin{example}
  We have already seen that, if $r = 1$ and $s = 0$, then the condition
  \eqref{eq:cong:C} in Theorem~\ref{thm:cong} is automatically satisfied for
  all primes $p \geq 5$. Let us illustrate in this example that condition
  \eqref{eq:cong:C} is similarly vacuous for other ``small'' values of $r$ and
  $s$. For instance, in the particularly interesting case $s = 0$, we find
  that, if $r \in \{ 1, 2, \ldots, 23 \}$, then the only tuples $( p, r, s)$,
  for which failure of condition \eqref{eq:cong:C} prevents additional
  congruences, are $( 5, 23, 0)$ and $( 7, 23, 0)$. These two cases are
  discussed in the next example, which demonstrates that the respective
  congruences indeed fail to extend to prime powers.
  
  To see that in the remaining cases, for which $r \in \{ 1, 2, \ldots, 23 \}$
  and $s = 0$, we can ignore condition \eqref{eq:cong:C} in
  Theorem~\ref{thm:cong}, observe that, modulo $p^2$,
  \begin{equation*}
    s - r / 24 \equiv r \frac{p^2 - 1}{24}
  \end{equation*}
  and that the right-hand is an integer in $\{ 0, 1, \ldots, p^2 - 1 \}$.
  Condition \eqref{eq:cong:C} is therefore equivalent to the inequality $r (
  p^2 - 1) / 24 < p ( p - 1)$. Rewriting this as $( 24 - r) p > r$, we find
  that this inequality holds whenever $p > 23 \geq r$. Checking the
  remaining small primes individually for all $r \in \{ 1, 2, \ldots, 23 \}$,
  we conclude that the only cases, for which condition \eqref{eq:cong:C} fails
  and $\max S ( p, r, s) > \max S^{\ast} ( p, r, s)$, are the two cases $( 5,
  23, 0)$ and $( 7, 23, 0)$ mentioned above.
  
  A similar analysis can be carried out for other small values of $s$.
\end{example}

\begin{example}
  \label{eg:523}Note that $S ( 5, 23, 0) = \{ 0, 1, 3 \}$ and $S^{\ast} ( 5,
  23, 0) = \{ 0, 1 \}$, as well as $S ( 7, 23, 0) = \{ 0, 2, 3, 4 \}$ and
  $S^{\ast} ( 7, 23, 0) = \{ 0, 2, 3 \}$. Hence, by Garvan's
  Theorem~\ref{thm:g}, the congruences
  \begin{equation*}
    \xi_{23} ( 5 m - j) \equiv 0 \pmod{5}, \hspace{2em}
     \xi_{23} ( 7 m - j) \equiv 0 \pmod{7}
  \end{equation*}
  hold for all positive integers $m$ if $j \in \{ 1, 2, 3 \}$. On the other
  hand, since condition \eqref{eq:cong:C} is not satisfied,
  Theorem~\ref{thm:cong} shows that of these six families of congruences only
  three extend to prime powers; namely,
  \begin{equation*}
    \xi_{23} ( 5^{\lambda} m - 1) \equiv 0 \pmod{5^{\lambda}} , \hspace{2em} \xi_{23} ( 7^{\lambda} m - 2) \equiv
     \xi_{23} ( 7^{\lambda} m - 1) \equiv 0 \pmod{7^{\lambda}} .
  \end{equation*}
  On the other hand, the other three families cannot be extend to prime
  powers, as is demonstrated by
  \begin{equation*}
    \xi_{23} ( 5^2 - 3) \equiv 10 \nequiv 0 \pmod{5^2},
     \hspace{1em} \xi_{23} ( 5^2 - 2) \equiv 5 \nequiv 0 \pmod{5^2},
  \end{equation*}
  and $\xi_{23} ( 7^2 - 3) \equiv 42 \nequiv 0$ modulo $7^2$. We should
  mention that these numbers are already quite large. For instance,
  \begin{equation*}
    \xi_{23} ( 5^2 - 3) = 105368264798040017097834938676731639668933422960.
  \end{equation*}
\end{example}

\begin{example}
  \label{eg:garvan5}As mentioned above, the case $r = - 1$ has combinatorial
  significance as well. Following \cite{garvan-fish}, we observe that $1$ is
  a pentagonal number, which implies $p - 1 \in S ( p, - 1, 0)$. In fact, for
  all primes $p > 5$, $p - 1 \in S^{\ast} ( p, - 1, 0)$, so that no
  congruences are implied by Garvan's Theorem~\ref{thm:g} for these values of
  $p, r, s$. In the case $p = 5$, on the other hand, $S ( 5, - 1, 0) = \{ 0,
  3, 4 \}$ while $S^{\ast} (5, - 1, 0) = \{ 0, 3 \}$. Theorem~\ref{thm:g}
  therefore yields the congruences
  \begin{equation*}
    \xi_{- 1} ( 5 m - 1) \equiv 0 \pmod{5},
  \end{equation*}
  which were conjectured in \cite{as-fish}. However, condition
  \eqref{eq:cong:C} is not satisfied for $( p, r, s) = ( 5, - 1, 0)$ since
  $\operatorname{digit}_1 ( 1 / 24 ; 5) = 4$, so that Theorem~\ref{thm:cong} does not
  provide an extension of this congruence to prime powers $5^{\lambda}$.
  Indeed, the congruences do not extend, as is demonstrated by
  \begin{equation*}
    \xi_{- 1} ( 5^2 - 1) = 11115833059268126770 \equiv 20 \nequiv 0
     \pmod{5^2} .
  \end{equation*}
  This is another illustration that, for $\lambda > 1$, we cannot, in general,
  replace the set $S ( p, r, s)$ with the set $S^{\ast} (p, r, s)$ in
  Theorem~\ref{thm:cong}.
  
  On the other hand, we note that $S^{\ast} (5, - 1, 2) = \{ 0, 2 \}$ and that
  condition \eqref{eq:cong:C} is satisfied for $( p, r, s) = ( 5, - 1, 2)$.
  Hence, for $j \in \{ 1, 2 \}$ and all positive integers $m$ and $\lambda$,
  $\xi_{- 1, 2} ( 5^{\lambda} m - j) \equiv 0$ modulo $5^{\lambda}$, or,
  equivalently,
  \begin{eqnarray*}
    \xi_{- 1} ( 5^{\lambda} m - 1) - 2 \xi_{- 1} ( 5^{\lambda} m - 2) + \xi_{-
    1} ( 5^{\lambda} m - 3) & \equiv & 0 \pmod{5^{\lambda}},\\
    \xi_{- 1} ( 5^{\lambda} m - 2) - 2 \xi_{- 1} ( 5^{\lambda} m - 3) + \xi_{-
    1} ( 5^{\lambda} m - 4) & \equiv & 0 \pmod{5^{\lambda}}
    .
  \end{eqnarray*}
\end{example}

Finally, we complement the congruences of Theorem~\ref{thm:cong} with
additional congruences for the case when $p$ divides $r$. These congruences
are considerably simpler to prove and are provided here only for the sake of
completeness and context.

\begin{lemma}
  \label{lem:cong:pr}Let $p$ be a prime, and $r$ a nonzero integer. If $j$ is
  coprime to $p$, then
  \begin{equation*}
    \xi_{p^{\lambda} r} ( p m - j) \equiv 0 \pmod{p^{\lambda}}
  \end{equation*}
  for all positive integers $m$ and $\lambda$.
\end{lemma}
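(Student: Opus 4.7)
The plan is to work directly with the generating function \eqref{eq:fishburn:rs}. Setting $s = 0$ and replacing $r$ by $p^{\lambda} r$ gives
\[
  \sum_{n \geq 0} \xi_{p^{\lambda} r} (n) q^n = \sum_{n \geq 0} \prod_{j = 1}^n \bigl( 1 - ( 1 - q)^{p^{\lambda} r j} \bigr) .
\]
The goal will be to show that the right-hand side, reduced modulo $p^{\lambda}$, is a power series in $q^p$. This immediately yields $\xi_{p^{\lambda} r} (n) \equiv 0 \pmod{p^{\lambda}}$ whenever $p \nmid n$, which covers $n = p m - j$ precisely when $p \nmid j$.

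The key lemma I would establish is that for any integer $N$ with $p^{\lambda} \mid N$ and any $k$ with $p \nmid k$, one has $\binom{N}{k} \equiv 0 \pmod{p^{\lambda}}$. This follows from the absorption identity $k \binom{N}{k} = N \binom{N - 1}{k - 1}$, which gives
\[
  \nu_p\!\left(\binom{N}{k}\right) = \nu_p ( N) - \nu_p ( k) + \nu_p\!\left(\binom{N - 1}{k - 1}\right) \ge \lambda
\]
as soon as $\nu_p ( k) = 0$ and $\nu_p ( N) \ge \lambda$. Applied to $N = p^{\lambda} r j$, this shows that in the expansion
\[
  1 - ( 1 - q)^{p^{\lambda} r j} = - \sum_{k \ge 1} \binom{p^{\lambda} r j}{k} ( - q)^k,
\]
every coefficient at a $q^k$ with $p \nmid k$ is divisible by $p^{\lambda}$.

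Therefore each factor $1 - ( 1 - q)^{p^{\lambda} r j}$ is congruent modulo $p^{\lambda}$ to a power series in $q^p$. Multiplying such factors preserves this property, so $\prod_{j = 1}^n ( 1 - ( 1 - q)^{p^{\lambda} r j})$ is a power series in $q^p$ modulo $p^{\lambda}$. Since this product is also divisible by $q^n$ (each factor starts at order $q^1$), the infinite sum over $n$ converges as a formal power series, and the coefficient of $q^{p m - j}$ in the sum is therefore $\equiv 0 \pmod{p^{\lambda}}$ for every $j$ coprime to $p$.

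There is no real obstacle here: the argument is purely a binomial-valuation computation, and the only point requiring mild care is to verify that the reduction modulo $p^{\lambda}$ commutes with the (formally convergent) infinite sum, which is ensured by the $q^n$-divisibility of the $n$-th summand.
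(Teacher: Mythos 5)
Your proposal is correct and follows essentially the same route as the paper: both reduce the claim to showing that $(1-q)^{p^{\lambda} r j}$ is, modulo $p^{\lambda}$, a power series in $q^p$, which in turn rests on the congruence $\binom{p^{\lambda} N}{k} \equiv 0 \pmod{p^{\lambda}}$ for $p \nmid k$. The only (harmless) difference is that you derive this binomial congruence from the absorption identity $k\binom{N}{k} = N\binom{N-1}{k-1}$, whereas the paper deduces it from Kummer's theorem on carries.
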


\section{Preliminaries}\label{sec:prelim}

The formal generating series \eqref{eq:fishburn} of the Fishburn numbers
equals $F (1 - q)$, where
\begin{equation}
  F (q) = \sum_{n \geq 0} (q ; q)_n \label{eq:F}
\end{equation}
is ``Kontsevich's strange function'' \cite{zagier-strange}. Note that the
series $F (q)$ is peculiar in that it does not converge in any open subset of
the complex plane; it is, however, well-defined, since it terminates, whenever
$q$ is a root of unity. As in \cite{as-fish}, we consider the truncated
series
\begin{equation*}
  F (q, N) = \sum_{n = 0}^N (q ; q)_n,
\end{equation*}
which we dissect as
\begin{equation}
  F (q, N) = \sum_{i = 0}^{p - 1} q^i A_p (N, i, q^p), \label{eq:Ap}
\end{equation}
where $A_p (N, i, q)$ is a polynomial in $q$. We record the following crucial
divisibility property of these polynomials, which was proved in
\cite{as-fish}.

\begin{lemma}
  \label{lem:alpha}Let $p$ be a prime. If $i \nin S (p)$, then
  \begin{equation*}
    A_p (p n - 1, i, q) = (1 - q)^n \alpha_p (n, i, q),
  \end{equation*}
  where the $\alpha_p (n, i, q)$ are polynomials in $q$ with integer
  coefficients.
\end{lemma}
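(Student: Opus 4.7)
The plan is to locate a polynomial $P(q) \in \mathbb{Z}[q]$, all of whose nonzero exponents lie in $S(p)$ modulo $p$, and to establish the congruence
\[ F(q, pn-1) \equiv P(q) \pmod{(1-q^p)^n}. \qquad (\star) \]
Granted $(\star)$, the claimed divisibility falls out by dissecting both sides according to residue class modulo $p$. Writing $P(q) = \sum_{i=0}^{p-1} q^i P_i(q^p)$, the support hypothesis on $P(q)$ forces $P_i \equiv 0$ for $i \nin S(p)$. Hence $A_p(pn-1, i, q^p) \equiv 0 \pmod{(1-q^p)^n}$, which is equivalent, after substituting $Q = q^p$, to $(1-Q)^n \mid A_p(pn-1, i, Q)$ in $\mathbb{Z}[Q]$. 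Since $(1-Q)^n$ has leading coefficient $(-1)^n$, polynomial division is clean and the quotient $\alpha_p(n, i, Q)$ automatically has integer coefficients.

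To produce $P(q)$, my starting point is the elementary divisibility
\[ (1-q^p)^{\lfloor j/p \rfloor} \mid (q;q)_j, \]
which holds because exactly $\lfloor j/p \rfloor$ of the factors $1 - q^k$ appearing in $(q;q)_j$ satisfy $p \mid k$, and for prime $p$ each such factor has $1-q^p$ as a divisor (via the cyclotomic factorisation $1-q^k = \prod_{d \mid k} \Phi_d(q)$). In particular, every term $(q;q)_j$ with $j \geq pn$ vanishes modulo $(1-q^p)^n$, so $F(q,N)$ stabilises modulo $(1-q^p)^n$ for $N \geq pn-1$, and $(\star)$ amounts to computing the common value of this stable residue in pentagonal form. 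To perform that rewriting, I would invoke Euler's pentagonal number theorem
\[ (q;q)_\infty = \sum_{k \in \mathbb{Z}} (-1)^k q^{k(3k-1)/2} \]
together with a finite $q$-series transformation --- for example, a finite analogue of Zagier's strange identity for $F(q)$ or a telescoping manipulation of the partial sums $\sum_{j=0}^{N}(q;q)_j$ using summation by parts against $(q;q)_j$ --- that expresses the stable residue as an alternating sum supported on pentagonal exponents.

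The dissection and integrality steps are routine; the main obstacle is constructing the pentagonal identity $(\star)$. One needs a $q$-series manipulation that simultaneously controls $n$ factors of $1 - q^p$ and forces the surviving exponents to be pentagonal, and it is precisely this identity that encodes the connection between the Fishburn truncations and Euler's pentagonal numbers discovered in \cite{as-fish}. Once the identity is in place, no additional $p$-adic machinery is required for this lemma; the heavier arithmetic work (Kummer's theorem and the like) is reserved for lifting the resulting mod-$p$ divisibility to the mod-$p^\lambda$ statements of Theorem~\ref{thm:cong}.
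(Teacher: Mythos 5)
Your reduction is correct as far as it goes: the observation that $(1-q^p)^{\lfloor j/p\rfloor}$ divides $(q;q)_j$, the resulting stabilisation of $F(q,N)$ modulo $(1-q^p)^n$ for $N\ge pn-1$, the dissection argument showing that $(\star)$ implies the lemma, and the integrality of the quotient because $(1-q^p)^n$ has leading coefficient $\pm 1$, are all sound. But be aware of what the comparison target is: the paper does not prove Lemma~\ref{lem:alpha} at all --- it records it as a result established in \cite{as-fish}. Measured against the Andrews--Sellers proof, your proposal has a genuine gap, and you name it yourself: the congruence $(\star)$, i.e.\ the existence of a polynomial $P(q)$ supported on pentagonal residue classes with $F(q,pn-1)\equiv P(q) \pmod{(1-q^p)^n}$, is not a routine consequence of Euler's pentagonal number theorem plus ``a telescoping manipulation.'' It is the entire mathematical content of the lemma, and it is precisely what Andrews and Sellers establish through a substantial $q$-series argument producing a finite, truncated analogue of the pentagonal structure (the same circle of ideas, via Rogers--Fine-type identities, underlies Garvan's companion result, Lemma~\ref{lem:alpha24}). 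What you have written is therefore a correct reformulation of the lemma, not a proof of it.

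A further caution about the route you sketch for closing the gap. Euler's pentagonal number theorem concerns the infinite product $(q;q)_\infty$, and Zagier's strange identity is a statement about asymptotic expansions of $F$ at roots of unity; neither is a formal identity in $\mathbb{Z}[[q]]$ that you may reduce modulo $(1-q^p)^n$. Indeed, $1-q^p$ is a unit in $\mathbb{Z}[[q]]$ (its inverse is $1+q^p+q^{2p}+\cdots$), so divisibility by $(1-q^p)^n$ is vacuous for power series and meaningful only for polynomials; recall also that $F(q)=\sum_{n\ge 0}(q;q)_n$ does not even converge as a formal power series in $q$. Any completion of your argument must therefore either remain entirely within polynomial identities --- which is what the argument of \cite{as-fish} does --- or interpret $(\star)$ as vanishing to order $n$ at $q=1$ and at the primitive $p$-th roots of unity and then invoke the root-of-unity machinery behind the strange identity. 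Either way, the missing identity is where all of the work lies.
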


As observed in \cite{as-fish}, numerical evidence suggests that, for $i \nin
S (p)$, the polynomials $A_p (p n - 1, i, q)$ are in fact divisible by $(q ;
q)_n$.

A similar result is shown in \cite{garvan-fish} for indices $i$ such that $i
\equiv - 1 / 24$ modulo a prime coprime to $24$. Since, modulo $p$, one
obtains the conclusion of Lemma~\ref{lem:alpha}, this result allowed Garvan to
prove the additional congruences provided by Theorem~\ref{thm:g}.

\begin{lemma}
  \label{lem:alpha24}Let $p \geq 5$ and $i \in \{ 1, 2, \ldots, p - 1
  \}$. If $i \equiv - 1 / 24$ modulo $p$, then
  \begin{equation*}
    A_p (p n - 1, i, q) = \left( \frac{12}{p} \right) p q^{\lfloor p / 24
     \rfloor} F ( q^p, p n - 1) + (1 - q)^n \beta_p (n, i, q),
  \end{equation*}
  where the $\beta_p (n, i, q)$ are polynomials in $q$ with integer
  coefficients.
\end{lemma}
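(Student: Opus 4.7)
The plan is to emulate Garvan's approach, which extends the Andrews--Sellers proof of Lemma~\ref{lem:alpha} to cover the ``anomalous'' pentagonal residue $i \equiv -1/24 \pmod{p}$. The natural starting point is the root-of-unity extraction
\begin{equation*}
q^i A_p(pn-1, i, q^p) = \frac{1}{p} \sum_{\zeta^p = 1} \zeta^{-i} F(\zeta q, pn-1),
\end{equation*}
combined with the factorization of $(\zeta q; \zeta q)_{pk-1}$ afforded by $1 - X^p = \prod_{\omega^p = 1}(1 - \omega X)$. Within each block of $p$ consecutive factors in $(\zeta q; \zeta q)_{pn-1}$, this produces a uniform contribution of $(q^p; q^p)_k$ times polynomial correction terms which, after averaging over the $p^{\text{th}}$ roots of unity, select the dissection component indexed by $i$.

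The crucial observation exploited by Garvan is that completing the square in the pentagonal exponent,
\begin{equation*}
\tfrac{1}{2} n (3n-1) = \tfrac{1}{24}\bigl((6n-1)^2 - 1\bigr),
\end{equation*}
converts the condition $j \in S(p)$ into $24 j + 1 \equiv (6n-1)^2 \pmod{p}$. The residue $i \equiv -1/24 \pmod{p}$ is precisely the one for which this congruence degenerates to $(6n-1)^2 \equiv 0 \pmod{p}$ with a unique solution, rather than pairing up via $\pm (6n-1)$. All nondegenerate pentagonal contributions cancel across the dissection just as in the Andrews--Sellers proof, but the degenerate contribution survives and produces a singular term. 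The coefficient $(12/p)\, p\, q^{\lfloor p/24 \rfloor}$ records this survivor: the factor $p$ is the magnitude of the resulting Gauss sum, the shift $q^{\lfloor p/24 \rfloor}$ lifts the anomalous fractional exponent $-1/24$ to its unique integer representative modulo $p$ (well-defined since $p \geq 5$ forces $\gcd(p, 24) = 1$), the factor $F(q^p, pn-1)$ captures the lift from a $p^{\text{th}}$-root-of-unity evaluation back to the full $q$-series, and the Kronecker symbol $(12/p)$ is the sign of the associated quadratic Gauss sum.

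Once the singular term is peeled off, the remaining expression can be handled exactly as in the proof of Lemma~\ref{lem:alpha}: the surviving nondegenerate pentagonal contributions pair up and telescope, while the $(1 - q^{pk})$ factors running through $(q;q)_{pn-1}$ furnish the asserted factor of $(1-q)^n$ in the remainder. The main obstacle is the precise Gauss-sum bookkeeping --- one must verify that the degenerate pentagonal residue contributes exactly $(12/p)\, p\, q^{\lfloor p/24 \rfloor} F(q^p, pn-1)$ with no extraneous remainder. The conceptual explanation for why the residue $-1/24$ is singled out is the Dedekind eta identity $\eta(\tau) = q^{1/24}(q;q)_\infty$, which is also the source of Zagier's strange identity for $F(q)$ and of the appearance of the Kronecker symbol $(12/\cdot)$ via the theta expansion of $\eta$.
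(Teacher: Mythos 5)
First, a point of comparison: the paper does not prove Lemma~\ref{lem:alpha24} at all. It is recalled in the Preliminaries from Garvan \cite{garvan-fish}, exactly as Lemma~\ref{lem:alpha} is recalled from Andrews--Sellers \cite{as-fish}, so the only argument your sketch can be measured against is Garvan's.

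Measured against that, there is a genuine gap, and you have named it yourself: the step you defer --- verifying that the degenerate pentagonal terms contribute exactly $\left(\frac{12}{p}\right) p q^{\lfloor p/24\rfloor}F(q^p,pn-1)$, with an error that is an honest multiple of $(1-q)^n$ --- \emph{is} the lemma. Everything else in your proposal (the root-of-unity extraction of $A_p$, the observation that $24j+1\equiv(6n-1)^2$ degenerates precisely at $j\equiv -1/24$, the eta-function heuristics) is framing; no identity is produced from which the singular term could actually be read off. Moreover, the mechanisms you describe do not hold up. In a $p$-dissection the residue classes do not interact, so nothing ``cancels across the dissection'': for $i\nin S(p)$ the point is that no pentagonal exponent lands in that class at all (which is what drives Lemma~\ref{lem:alpha}), while for $i\in S(p)\setminus\{i_0\}$ the components in general genuinely fail to be divisible by $(1-q)^n$ --- no pairing rescues them. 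The proposed block factorization of $(\zeta q;\zeta q)_{pk-1}$ via $1-X^p=\prod_{\omega^p=1}(1-\omega X)$ also does not apply: in a block of $p$ consecutive factors $1-\zeta^j q^j$, the power of $q$ varies along with the root of unity, so the product is not of the form $\prod_{\omega}(1-\omega X)$ for any fixed $X$. Finally, the constants are not the magnitude and sign of a Gauss sum. In Garvan's argument they come from a truncated polynomial form of Zagier's strange identity \cite{zagier-strange}, in which the pentagonal terms carry coefficients $m\left(\frac{12}{m}\right)q^{(m^2-1)/24}$: the class of $-1/24$ receives contributions exactly from $m\equiv 0\pmod{p}$, and writing $m=pm'$ yields the factor $p\left(\frac{12}{p}\right)$ by multiplicativity of the Kronecker symbol, while the decomposition $(p^2m'^2-1)/24 = i_0 + p\lfloor p/24\rfloor + p^2(m'^2-1)/24$ produces the shift $q^{\lfloor p/24\rfloor}$ and lets the inner sum over $m'$ reassemble into $F$ of the $p$-th power variable. (Note also that $\lfloor p/24\rfloor$ is $\operatorname{digit}_1(-1/24;p)$, not ``the unique integer representative of $-1/24$ modulo $p$'' --- that representative is $i_0$, which sits outside $A_p$ as the prefactor $q^{i_0}$ in the dissection \eqref{eq:Ap}.) Supplying such a truncated identity and carrying out this bookkeeping with the $(1-q)^n$ error under control is exactly the work your proposal leaves undone.
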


\section{Proofs}\label{sec:proof}

Before turning to the proof of Theorem~\ref{thm:cong}, we prove some lemmas,
which will be helpful for establishing the desired congruences modulo
$p^{\lambda}$. In the sequel, given formal power series $A ( q), B ( q) \in
\mathbb{Z} [ [ q]]$, we write
\begin{equation*}
  A ( q) \equiv B ( q) \pmod{p^{\lambda}}
\end{equation*}
to mean that the coefficients of $A ( q)$ and $B ( q)$ agree modulo
$p^{\lambda}$.

To begin with, we recall a well-known result of Kummer \cite{kummer-binom},
which gives the $p$-adic valuation of binomial coefficients. As usual, and as
in \eqref{eq:padic}, the $p$-adic digits $n_0, n_1, n_2, \ldots$ of a
(possibly negative) integer $n$ are determined by $n_i \in \{ 0, 1, \ldots, p
- 1 \}$ and $n = n_0 + n_1 p + n_2 p^2 + \cdots$.

\begin{theorem}
  \label{thm:kummer}{\dueto{Kummer, \cite{kummer-binom}}}Let $p$ be a prime,
  and $n, k$ integers such that $k \geq 0$. Then the $p$-adic valuation
  of the binomial coefficient $\binom{n}{k}$ is equal to the number of carries
  when adding $k$ and $n - k$ in base $p$.
\end{theorem}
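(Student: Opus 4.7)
The natural approach is to deduce Kummer's theorem from Legendre's formula
\[
\nu_p ( m !) = \frac{m - s_p ( m)}{p - 1},
\]
where $s_p ( m)$ denotes the sum of the base-$p$ digits of a nonnegative integer $m$. Legendre's formula itself follows from the identity $\nu_p ( m !) = \sum_{i \geq 1} \lfloor m / p^i \rfloor$ by telescoping the floor functions against the base-$p$ expansion of $m$; I would include this short computation for completeness.

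For the case $0 \leq k \leq n$, I would then write $\binom{n}{k} = n ! / ( k ! ( n - k) !)$, apply Legendre's formula to each of the three factorials, and obtain
\[
\nu_p \binom{n}{k} = \frac{s_p ( k) + s_p ( n - k) - s_p ( n)}{p - 1}.
\]
The identification of the right-hand side with the carry count comes from tracking the base-$p$ addition of $k$ and $n - k$ one column at a time. If there were no carries, the digit-wise sum would already be the base-$p$ expansion of $n$, giving $s_p ( n) = s_p ( k) + s_p ( n - k)$. Each actual carry replaces a column sum of the form $d + p$, with $0 \leq d \leq p - 2$, by the digit $d$ and adds $1$ to the next column, thereby decreasing the digit sum of the result by exactly $p - 1$. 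Hence if $c$ denotes the total number of carries, then $s_p ( k) + s_p ( n - k) - s_p ( n) = ( p - 1) c$, which combined with the displayed formula yields $\nu_p \binom{n}{k} = c$.

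For negative $n$ (still with $k \geq 0$), I would invoke the reflection identity $\binom{n}{k} = ( - 1)^k \binom{k - n - 1}{k}$, which has the same $p$-adic valuation and reduces the valuation computation to the nonnegative case handled above. To confirm that the carry count also matches, I would use the fact that $- 1$ has the $p$-adic expansion $( p - 1) + ( p - 1) p + ( p - 1) p^2 + \cdots$, so the eventually-$( p - 1)$ tails of the $p$-adic expansions of $n$ and $n - k$ align to reproduce the tail of $n$ past some finite position without producing ``genuine'' carries; below that position, a direct bookkeeping argument translates the finite carries of the ordinary addition $k + ( k - n - 1)$ into the $p$-adic carries of $k + ( n - k)$.

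\textbf{Main obstacle.} The case $n \geq 0$ is entirely routine once Legendre's formula is in hand. The only delicate point is the negative-$n$ case, where one must interpret ``carries'' correctly for $p$-adic integers with infinitely many nonzero digits; the reflection identity above sidesteps this almost entirely, leaving only a short bookkeeping step to reconcile the two notions of carry.
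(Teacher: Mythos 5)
Your argument is correct in substance, but it takes a genuinely different route from the paper's for the one non-routine step. The paper does not reprove the classical case at all: it explicitly assumes the statement for $n \ge k \ge 0$ (which is what your Legendre-formula computation establishes) and concentrates on extending it to arbitrary integers $n$. For $n < 0$ it does not use the reflection identity; instead it observes that $\nu_p \bigl( \binom{n}{k} \bigr) = \nu_p \bigl( \binom{p^{\lambda} + n}{k} \bigr)$ for $\lambda$ large enough, invokes the nonnegative case for the right-hand side, and notes that adding $k$ to $p^{\lambda} + n - k$ produces the same carries as adding $k$ to $n - k$, since the two summands share all digits below position $\lambda$. Your route via $\binom{n}{k} = ( - 1)^k \binom{k - n - 1}{k}$ works equally well, and the ``bookkeeping'' you defer can be made precise cleanly: writing $t = k + ( - n - 1)$, the $p$-adic digits of $n - k = - ( t + 1)$ are $p - 1 - t_i$, and a one-line induction shows that the carry sequences of the additions $k + ( - n - 1)$ and $k + ( n - k)$ satisfy the same recurrence with the same initial condition, hence coincide position by position; I would spell this out rather than appeal to the tails ``aligning''. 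The trade-off is that your version is self-contained (the paper leans on the standard reference for the nonnegative case) at the cost of somewhat more digit manipulation than the shift-by-$p^{\lambda}$ trick. One small omission: you treat $0 \le k \le n$ and $n < 0$ but not $0 \le n < k$, where $\binom{n}{k} = 0$ has valuation $\infty$ and the addition of $k$ and the negative integer $n - k$ involves infinitely many carries; the paper disposes of this case explicitly, and your write-up should too.
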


\begin{proof}
  Since this result is commonly only stated for the case $n \geq k
  \geq 0$, we assume that this case has been proved and show here only
  how to use it to deduce the case of general $n$. First, in the case $n
  \geq 0$ and $n < k$, the binomial coefficient is zero, so that its
  $p$-adic valuation is $\infty$. On the other hand, the $p$-adic addition of
  $k$ and $n - k < 0$ results in infinitely many carries.
  
  Suppose now that $n < 0$. Observe that, for large enough $\lambda$,
  \begin{equation*}
    \nu_p \left( \binom{n}{k} \right) = \nu_p \left( \binom{p^{\lambda} +
     n}{k} \right)
  \end{equation*}
  and, by Kummer's theorem in the nonnegative case, the right-hand side is the
  number of carries when adding $k$ and $p^{\lambda} + n - k$ in base $p$. It
  only remains to observe that this equals the number of carries when adding
  $k$ and $n - k$.
\end{proof}

\begin{lemma}
  \label{lem:cong:pr2}Let $p$ be a prime, $r$ an integer, and $j \in \{ 1, 2,
  \ldots, p - 1 \}$. If $s$ is an integer such that
  \begin{equation}
    \binom{s}{p^2 - j} \equiv 0 \pmod{p},
    \label{eq:cong:pr2:assum}
  \end{equation}
  then, for any positive integers $m$, $\lambda$,
  \begin{equation*}
    \xi_{p^2 r, s} ( p^{\lambda} m - j) \equiv 0 \pmod{p^{\lambda - 1}} .
  \end{equation*}
\end{lemma}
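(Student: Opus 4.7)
The plan is to derive a closed formula for $\xi_{p^2 r, s}(p^\lambda m - j)$ as a finite double sum of binomial coefficients, and then use Kummer's theorem to show that each binomial appearing in this sum is divisible by $p^{\lambda - 1}$.

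First I would set $w := 1 - (1-q)^{p^2}$, so that $(1-q)^{p^2 r} = (1-w)^r$. Substituting $Y = w$ into the defining identity $\sum_{n \geq 0} \xi_r(n) Y^n = F((1-Y)^r)$ (the $s = 0$ case of \eqref{eq:fishburn:rs}) then yields
\begin{equation*}
  F((1-q)^{p^2 r}) = F((1-w)^r) = \sum_{n \geq 0} \xi_r(n) w^n.
\end{equation*}
Since $w = p^2 q + O(q^2)$ has $q$-valuation one, this substitution is valid in $\mathbb{Z}[[q]]$ and only $n \leq N$ can contribute to the coefficient of $q^N$. Expanding $w^n = \sum_{k=0}^n \binom{n}{k}(-1)^k (1-q)^{p^2 k}$ and using the identity $[q^N](1-q)^a = (-1)^N \binom{a}{N}$ for every integer $a$, I would then obtain
\begin{equation*}
  \xi_{p^2 r, s}(p^\lambda m - j) = (-1)^{p^\lambda m - j} \sum_{n=0}^{p^\lambda m - j} \sum_{k=0}^n (-1)^k \binom{n}{k} \xi_r(n) \binom{s + p^2 k}{p^\lambda m - j}.
\end{equation*}
It then suffices to prove that $\binom{s + p^2 k}{p^\lambda m - j} \equiv 0 \pmod{p^{\lambda - 1}}$ for every $k \geq 0$.

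For this divisibility, I would apply Kummer's theorem (Theorem~\ref{thm:kummer}): the quantity $\nu_p \binom{s + p^2 k}{N}$ equals the number of carries when adding $N := p^\lambda m - j$ and $(s + p^2 k) - N$ in base $p$. The crucial observation is that $p^2 k$ has zero $p$-adic digits at positions $0$ and $1$, so the two lowest $p$-adic digits of $s + p^2 k$ coincide with the two lowest $p$-adic digits $s_0, s_1$ of $s$. On the other hand, the base-$p$ digits of $N$ are $p - j$ at position $0$ and $p - 1$ at each of the positions $1, 2, \ldots, \lambda - 1$. By Lucas's theorem, the hypothesis $\binom{s}{p^2 - j} \equiv 0 \pmod p$ is equivalent to $\binom{s_0}{p-j}\binom{s_1}{p-1} \equiv 0 \pmod p$, i.e., either $s_0 < p - j$ or $s_1 < p - 1$. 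In the first case, subtracting $N$ from $s + p^2 k$ in base $p$ produces a borrow at position $0$ that cascades through the block of $(p-1)$'s at positions $1, \ldots, \lambda - 1$, yielding $\lambda$ carries in the corresponding Kummer addition; in the second case, the borrow originates at position $1$ and still cascades through positions $2, \ldots, \lambda - 1$, yielding $\lambda - 1$ carries. Either way, $\nu_p \binom{s + p^2 k}{p^\lambda m - j} \geq \lambda - 1$, and the lemma follows.

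The main technical obstacle is the borrow-and-carry cascade above: one must case-split on which factor of $\binom{s_0}{p-j}\binom{s_1}{p-1}$ vanishes modulo $p$ and then carefully track the propagation of a borrow through the run of $(p-1)$'s in the $p$-adic expansion of $N$. A minor additional subtlety is that $s$ is only assumed to be an integer, so the digit discussion should be phrased in terms of the $p$-adic digits of $s$ as an element of $\mathbb{Z}_p$, which handles possibly negative $s$ uniformly.
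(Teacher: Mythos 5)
Your proposal is correct and takes essentially the same approach as the paper: both reduce the lemma to the single binomial congruence $\binom{s + p^2 a}{p^{\lambda} m - j} \equiv 0 \pmod{p^{\lambda-1}}$ and prove it via Kummer's theorem, using hypothesis \eqref{eq:cong:pr2:assum} to force a carry in the two lowest digit positions, which then cascades through the run of digits $p-1$ in $p^{\lambda} m - j$. The only cosmetic difference is that you expand through $w = 1-(1-q)^{p^2}$ and the $r$-Fishburn numbers, whereas the paper expands the product $\prod_{j}(1-(1-q)^{p^2 r j})$ directly into powers of $(1-q)^{p^2 r n + s}$.
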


\begin{proof}
  Using Zagier's generating function \eqref{eq:fishburn}, we have
  \begin{equation*}
    \sum_{n \geq 0} \xi_{p^2 r, s} (n) q^n = ( 1 - q)^s \sum_{n
     \geq 0} \prod_{j = 1}^n (1 - (1 - q)^{p^2 r j}) .
  \end{equation*}
  Expanding the product on the right-hand side, this can be written in the
  form
  \begin{equation*}
    \sum_{n \geq 0} c_n ( 1 - q)^{p^2 r n + s}
  \end{equation*}
  for some integer coefficients $c_n$. It therefore suffices to show that, for
  any integer $a$, the coefficient of $q^{p^{\lambda} m - j}$ in $( 1 -
  q)^{p^2 a + s}$ vanishes modulo $p^{\lambda - 1}$. Equivalently, it is
  enough to prove that, for any integer $a$,
  \begin{equation}
    \binom{p^2 a + s}{p^{\lambda} m - j} \equiv 0 \pmod{p^{\lambda - 1}} . \label{eq:cong:pr2:binom}
  \end{equation}
  By Kummer's Theorem~\ref{thm:kummer}, the $p$-adic valuation of the binomial
  coefficient $\binom{n}{k}$ is equal to the number of carries when adding $k$
  and $n - k$ in base $p$. Write $n = p^2 a + s$, $k = p^{\lambda} m - j$ and
  $k' = n - k$. Denote with $k_0, k_1, \ldots$ the $p$-adic digits of $k =
  p^{\lambda} m - j$, and, likewise, with $k'_0, k_1', \ldots$ the digits of
  $k'$. By construction, $k_0 = p - j$ and $k_1 = k_2 = \ldots = k_{\lambda -
  1} = p - 1$. Since the digits $k_1, k_2, \ldots, k_{\lambda - 1}$ have
  maximal value, if a carry occurs when adding $k_0 + k_1 p$ and $k'_0 + k_1'
  p$, then the number of carries when adding $k$ and $k'$ is at least $\lambda
  - 1$. Hence, \eqref{eq:cong:pr2:binom} follows from Kummer's theorem.
  
  It remains to observe that, again by Kummer's Theorem~\ref{thm:kummer},
  assumption \eqref{eq:cong:pr2:assum} is equivalent to a carry occuring when
  adding $k_0 + k_1 p$ and $k'_0 + k_1' p$.
\end{proof}

\begin{lemma}
  \label{lem:bigO}Let $\lambda \geq 1$ be an integer. For integers $r$
  and $n$ such that $n \geq \lambda$,
  \begin{equation*}
    ( 1 - (1 - q)^{r p})^n \equiv O (q^{p n - (p - 1) (\lambda - 1)})
     \pmod{p^{\lambda}} .
  \end{equation*}
\end{lemma}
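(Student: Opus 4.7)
My plan is to factor $1-(1-q)^{rp}$ so as to separate the two sources of divisibility: powers of $q$ coming from $q^p$, and powers of $p$ coming from the middle binomial coefficients $\binom{p}{k}$. The starting observation is that
\[
  (1-q)^p = 1 - q^p + p\, h(q),
\]
where $h(q) := \tfrac{1}{p}\bigl((1-q)^p - 1 + q^p\bigr) \in q\mathbb{Z}[q]$; the integrality of $h$ follows from $p \mid \binom{p}{k}$ for $1 \le k \le p-1$, and $h(0) = 0$ by direct inspection. Setting $W := q^p - p\, h(q) \in q\mathbb{Z}[q]$, this reads $(1-q)^p = 1 - W$, so $(1-q)^{rp} = (1-W)^r$. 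Since $W$ has positive $q$-valuation, the binomial series
\[
  (1-W)^r = \sum_{k \ge 0}(-1)^k \binom{r}{k} W^k
\]
is a well-defined element of $\mathbb{Z}[[q]]$ for every integer $r$ (positive or negative). Peeling off the $k = 0$ term and factoring out $W$ then gives
\[
  1 - (1-q)^{rp} = W \cdot T(q), \qquad T(q) = \sum_{k \ge 1}(-1)^{k+1}\binom{r}{k}W^{k-1} \in \mathbb{Z}[[q]].
\]

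Consequently $(1 - (1-q)^{rp})^n = W^n\, T(q)^n$. Because multiplication by the integer-coefficient series $T(q)^n$ cannot lower the $q$-valuation of any coefficient modulo $p^\lambda$, it is enough to prove the congruence for $W^n$ alone, and for this I would simply apply the binomial theorem:
\[
  W^n = (q^p - p\, h(q))^n = \sum_{i=0}^n \binom{n}{i}(-p)^i h(q)^i\, q^{p(n-i)}.
\]
Since $h(q) \in q\mathbb{Z}[q]$, the $i$-th summand carries the explicit factor $p^i$ and has $q$-valuation at least $p(n-i) + i = pn - (p-1)i$. Modulo $p^\lambda$ the summands with $i \ge \lambda$ vanish, while those with $i \le \lambda - 1$ contribute only to coefficients of $q^j$ with $j \ge pn - (p-1)(\lambda - 1)$, which is precisely the bound claimed.

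The argument is mechanical once the factorization $1 - (1-q)^{rp} = W \cdot T(q)$ is in hand, so I do not anticipate a real obstacle. The one point requiring a moment of care is that $T(q)$ lies in $\mathbb{Z}[[q]]$ uniformly in $r$---in particular when $r < 0$, in which case $(1-W)^r$ is a genuine infinite series rather than a polynomial---but this is immediate from $\binom{r}{k} \in \mathbb{Z}$ for every integer $r$ and every $k \ge 0$.
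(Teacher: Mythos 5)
Your proof is correct and follows essentially the same route as the paper: both decompose $(1-q)^p = 1 - q^p + p\,h(q)$ with $h \in q\mathbb{Z}[q]$, binomially expand the $n$-th power, and observe that modulo $p^\lambda$ only the terms with at most $\lambda-1$ explicit factors of $p$ survive, each having $q$-valuation at least $pn-(p-1)(\lambda-1)$. The only cosmetic difference is that you first factor $1-(1-q)^{rp} = W\,T(q)$ to isolate the $r$-dependence in the harmless factor $T(q)^n \in \mathbb{Z}[[q]]$, whereas the paper expands $(pqh_1(q)+q^ph_2(q))^n$ directly.
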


\begin{proof}
  Note that the case $\lambda = 1$ is an immediate consequence of the fact
  that $( 1 - q)^p \equiv 1 - q^p$ modulo $p$. In general, we conclude that
  $(1 - q)^p = 1 + p q f (q) - q^p$ for some polynomial $f (q) \in \mathbb{Z}
  [q]$. It follows from another binomial expansion that, for any integer $r$,
  \begin{equation*}
    1 - (1 - q)^{r p} = 1 - ( 1 + p q f (q) - q^p)^r = p q h_1 (q) + q^p h_2
     ( q)
  \end{equation*}
  for some power series $h_1 (q), h_2 ( q) \in \mathbb{Z} [ [ q]]$.
  Therefore, modulo $p^{\lambda}$,
  \begin{eqnarray*}
    ( 1 - (1 - q)^{r p})^n & = & \sum_{k = 0}^n \binom{n}{k} (p q h_1 (q))^k
    (q^p h_2 ( q))^{n - k}\\
    & \equiv & \sum_{k = 0}^{\lambda - 1} \binom{n}{k} (p q h_1 (q))^k (q^p
    h_2 ( q))^{n - k}\\
    & = & O (q^{(\lambda - 1) + p (n - (\lambda - 1))}),
  \end{eqnarray*}
  as claimed.
\end{proof}

\begin{lemma}
  \label{lem:binom0}Let $p$ be a prime, $a$ an integer and $i \in \{0, 1,
  \ldots, p - 1\}$. If $j$ is an integer such that $0 < j < p - i$, then
  \begin{equation*}
    \binom{p a + i}{p^{\lambda} m - j} \equiv 0 \pmod{p^{\lambda}} .
  \end{equation*}
\end{lemma}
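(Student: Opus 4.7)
The plan is to apply Kummer's Theorem~\ref{thm:kummer} and count carries in base $p$. Set $n = pa + i$, $k = p^{\lambda} m - j$, and $k' = n - k$; it suffices to show that adding $k$ and $k'$ in base $p$ produces at least $\lambda$ carries.

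First I would identify the low $p$-adic digits of $k = p^{\lambda} m - j$. Since $m \ge 1$ and $0 < j \le p - 1 - i \le p - 1$, the subtraction of $j$ from $p^{\lambda} m$ borrows through exactly the first $\lambda$ places, so $k_0 = p - j$ and $k_1 = k_2 = \cdots = k_{\lambda - 1} = p - 1$. Next, I would pin down $k_0'$: reducing modulo $p$ gives $k' \equiv n - k \equiv i + j \pmod{p}$, and the hypothesis $0 < j < p - i$ forces $0 < i + j < p$, so $k_0' = i + j$ exactly.

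The main step is to observe that a carry is produced at every one of the positions $0, 1, \ldots, \lambda - 1$. At position $0$, $k_0 + k_0' = (p - j) + (i + j) = p + i \ge p$, producing a carry. At each subsequent position $\ell \in \{1, 2, \ldots, \lambda - 1\}$, the maximal digit $k_\ell = p - 1$ combined with the incoming carry already sums to $p$, so $k_\ell + k_\ell' + 1 \ge p$ regardless of $k_\ell'$, producing another carry. This yields at least $\lambda$ carries, and Kummer's theorem then gives $\nu_p \binom{pa + i}{p^{\lambda} m - j} \ge \lambda$, as required.

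I do not expect a real obstacle: the argument is bookkeeping of $p$-adic digits, with the crucial feature that the ``top'' digits of $p^{\lambda} m - j$ are maximal, which lets the initial carry at position $0$ propagate. The only point worth noting is the case $pa + i < 0$, but this is covered by the extension of Kummer's theorem proved earlier in the paper (one passes to $p^N + n$ for large $N$, which does not alter the low digits of $n$ and hence does not alter the carry count).
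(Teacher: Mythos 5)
Your proof is correct and follows essentially the same route as the paper: apply Kummer's theorem and count carries, using that $k_0 = p - j$ forces a carry in the lowest position (you compute $k_0' = i+j$ directly, the paper infers the carry from $k_0 > n_0$) and that the maximal digits $k_1 = \cdots = k_{\lambda-1} = p-1$ propagate it. No issues.
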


\begin{proof}
  Write $n = p a + i$ and $k = p^{\lambda} m - j$. By Kummer's
  Theorem~\ref{thm:kummer}, we need to show that there are at least $\lambda$
  carries when adding $k$ and $n - k$ in base $p$. Let $n_0, n_1, \ldots$ be
  the $p$-adic digits of $n$, and let $k_0, k_1, \ldots$ be the $p$-adic
  digits of $k$. By assumption, $n_0 = i$ and $k_0 = p - j$, so that the
  condition on $j$ implies that $k_0 > n_0$. In other words, a carry occurs
  when adding the least-order $p$-adic digits of $k$ and $n - k$. Moreover,
  since $k_1 = k_2 = \ldots = k_{\lambda - 1} = p - 1$, we conclude that
  adding $k$ and $n - k$ in base $p$ results in at least $\lambda$ carries.
\end{proof}

We are now in a position to prove Theorem~\ref{thm:cong}.

\begin{proof}[Proof of Theorem~\ref{thm:cong}]
  We need to show that the coefficient of
  $q^{p^{\lambda} m - j}$ in
  \begin{equation*}
    ( 1 - q)^s F ( ( 1 - q)^r) = \lim_{N \rightarrow \infty} ( 1 - q)^s F ( (
     1 - q)^r, N)
  \end{equation*}
  vanishes modulo $p^{\lambda}$. Let $n \geq \lambda$ be an integer.
  Following \cite{as-fish}, we split the truncated series
  \begin{equation*}
    F ((1 - q)^r, p n - 1) = \sum_{i = 0}^{p - 1} (1 - q)^{r i} A_p (p n - 1,
     i, (1 - q)^{r p})
  \end{equation*}
  into the part corresponding to indices $i \in S (p)$ and a second part
  corresponding to indices $i \nin S (p)$. Here, the polynomials $A_p$ are as
  defined by \eqref{eq:Ap}. If $i \nin S (p)$, then Lemma~\ref{lem:alpha}
  implies that
  \begin{equation*}
    A_p (p n - 1, i, (1 - q)^{r p}) = (1 - (1 - q)^{r p})^n \alpha_p (n, i,
     (1 - q)^{r p}) .
  \end{equation*}
  It therefore follows from Lemma~\ref{lem:bigO} that, for $i \nin S (p)$,
  \begin{equation*}
    A_p (p n - 1, i, (1 - q)^{r p}) \equiv O (q^{p n - (p - 1) (\lambda -
     1)}) \pmod{p^{\lambda}} .
  \end{equation*}
  (In fact, it suffices to observe the simpler fact that $A_p (p n - 1, i, (1
  - q)^{r p}) = O (q^n)$.) We conclude that, modulo $p^{\lambda}$,
  \begin{equation*}
    F ((1 - q)^r, p n - 1) \equiv \sum_{i \in S (p)} (1 - q)^{r i} A_p (p n -
     1, i, (1 - q)^{r p}) + O (q^{p n - (p - 1) (\lambda - 1)}) .
  \end{equation*}
  Choosing $n$ large enough, it therefore suffices to show that the
  coefficient of $q^{p^{\lambda} m - j}$ in
  \begin{equation}
    (1 - q)^{r i + s} A_p (p n - 1, i, (1 - q)^{r p}) \label{eq:A:pf}
  \end{equation}
  vanishes modulo $p^{\lambda}$ for all $i \in S ( p)$.
  
  First, suppose that $j \in \{1, 2, \ldots, p - 1 - \max S ( p, r, s) \}$.
  Since $A_p (p n - 1, i, q)$ is a polynomial in $q$, it suffices to show
  that, for any integer $a$,
  \begin{equation}
    [q^{p^{\lambda} m - j}] (1 - q)^{a p + r i + s} \equiv 0 \pmod{p^{\lambda}} . \label{eq:coeffp0}
  \end{equation}
  Since $r$ is relatively prime to $p$, the condition $i \in S (p)$ implies $r
  i + s \equiv i'$ modulo $p$ for some $i' \in S (p, r, s)$. The condition on
  $j$ implies that $j < p - i'$, and congruence \eqref{eq:coeffp0} follows
  from Lemma~\ref{lem:binom0}.
  
  Finally, suppose that the triple $( p, r, s)$ satisfies condition
  \eqref{eq:cong:C} and that $j \in \{1, 2, \ldots, p - 1 - \max S^{\ast} ( p,
  r, s) \}$. Let $i_0 \in \{ 0, 1, \ldots, p - 1 \}$ be such that $i_0 \equiv
  - 1 / 24$ modulo $p$. If we can show that, for $i = i_0$, the coefficient of
  $q^{p^{\lambda} m - j}$ in \eqref{eq:A:pf} vanishes modulo $p^{\lambda}$,
  then it only remains to consider \eqref{eq:A:pf} for $i \in S^{\ast} ( p) =
  S ( p) \backslash \{ i_0 \}$ and the same argument as in the previous case
  proves our desired congruence.
  
  By Lemma~\ref{lem:alpha24},
  \begin{equation*}
    A_p (p n - 1, i_0, q) = \left( \frac{12}{p} \right) p q^{\lfloor p / 24
     \rfloor} F ( q^p, p n - 1) + O ( ( 1 - q)^n) .
  \end{equation*}
  As above, it follows from Lemma~\ref{lem:bigO} that, if $n$ is large enough,
  then we may ignore the term $O ( ( 1 - q)^n)$. It therefore suffices to show
  that the coefficient of $q^{p^{\lambda} m - j}$ in
  \begin{equation*}
    \left( \frac{12}{p} \right) p (1 - q)^{r i_0 + s + r p \lfloor p / 24
     \rfloor} F ( (1 - q)^{r p^2})
  \end{equation*}
  vanishes modulo $p^{\lambda}$. Equivalently, we need to prove that
  \begin{equation*}
    \xi_{p^2 r, r i_0 + s + r p \lfloor p / 24 \rfloor} ( p^{\lambda} m - j)
     \equiv 0 \pmod{p^{\lambda - 1}} .
  \end{equation*}
  These congruences follow from Lemma~\ref{lem:cong:pr2} if
  \begin{equation}
    \binom{r i_0 + s + r p \lfloor p / 24 \rfloor}{p^2 - j} \equiv 0
    \pmod{p} . \label{eq:cong:Cj}
  \end{equation}
  Observe that the $p$-adic expansion of $- 1 / 24$ is $i_0 + i_1 p + i_2 p^2
  + \cdots$, where $i_1 = \lfloor p / 24 \rfloor$. This is easily seen from
  \begin{equation*}
    \operatorname{digit}_1 \left( - \frac{1}{24} ; p \right) = \operatorname{digit}_1 \left(
     \frac{p^2 - 1}{24} ; p \right) = \left( \frac{p^2 - 1}{24} - i_0 \right)
     / p = \left\lfloor \frac{p^2 - 1}{24 p} \right\rfloor = \left\lfloor
     \frac{p}{24} \right\rfloor .
  \end{equation*}
  Since $p \nmid r$, it follows that
  \begin{equation}
    r i_0 + s + r p \lfloor p / 24 \rfloor \equiv s - r / 24 \pmod{p^2} . \label{eq:cong:i0}
  \end{equation}
  Combining \eqref{eq:cong:Cj} and \eqref{eq:cong:i0}, it only remains to
  prove
  \begin{equation}
    \binom{s - r / 24}{p^2 - j} \equiv 0 \pmod{p} .
    \label{eq:cong:Cj2}
  \end{equation}
  Observing that $\operatorname{digit}_1 ( p^2 - j ; p) = p - 1$, we find that this
  congruence follows from Kummer's Theorem~\ref{thm:kummer} and the assumption
  that $\operatorname{digit}_1 ( s - r / 24 ; p) < p - 1$. This completes our proof.
\end{proof}

\begin{remark}
  By Kummer's Theorem~\ref{thm:kummer}, congruence \eqref{eq:cong:Cj2} is
  equivalent to
  \begin{equation}
    \operatorname{digit}_1 ( s - r / 24 ; p) < p - 1, \hspace{1em} \text{or}
    \hspace{1em} \operatorname{digit}_0 ( s - r / 24 ; p) < p - j. \label{eq:cong:Cjx}
  \end{equation}
  The purpose of this remark is to note that the second of these conditions
  does not hold in the cases which are considered in the course of our proof
  of Theorem~\ref{thm:cong}. In other words, one does not obtain a stronger
  version of Theorem~\ref{thm:cong} by replacing condition \eqref{eq:cong:C}
  with the weaker condition \eqref{eq:cong:Cj2}.
  
  Recall that $S^{\ast} (p, r, s)$ is obtained from $S (p, r, s)$ by removing
  the element $s_0 \in \{ 0, 1, \ldots, p - 1 \}$ which is congruent to $s - r
  / 24$ modulo $p$. By construction, $s_0 = \operatorname{digit}_0 ( s - r / 24 ; p)$.
  Note that replacing $S (p, r, s)$ with $S^{\ast} (p, r, s)$ in
  Theorem~\ref{thm:cong} provides additional congruences only if $\max S ( p,
  r, s) = s_0$ and the value of $j$ is such that $j > p - 1 - s_0$. In this
  case, we have $p - j \leq s_0$, so that the second condition in
  \eqref{eq:cong:Cjx} does not hold.
\end{remark}

Finally, let us prove the considerably simpler Lemma~\ref{lem:cong:pr}.

\begin{proof}[Proof of Lemma~\ref{lem:cong:pr}]
  We have to show that $\xi_{p^{\lambda}
  r} ( p m + j) \equiv 0$ modulo $p^{\lambda}$ whenever $j$ is not divisible
  by $p$. These congruences are equivalent to the fact that, modulo
  $p^{\lambda}$, the generating series \eqref{eq:fishburn:rs} for the numbers
  $\xi_{p^{\lambda} r} ( n)$ can be written as
  \begin{equation*}
    \sum_{n \geq 0} \xi_{p^{\lambda} r} (n) q^n = \sum_{n \geq 0}
     \prod_{j = 1}^n (1 - (1 - q)^{p^{\lambda} r j}) \equiv \sum_{n \geq
     0} c_n q^{p n},
  \end{equation*}
  for some integers $c_n$. It therefore suffices to show that
  \begin{equation*}
    (1 - q)^{p^{\lambda} r j} \equiv h ( q^p) \pmod{p^{\lambda}},
  \end{equation*}
  for some power series $h ( q) \in \mathbb{Z} [ [ q]]$. After binomially
  expanding the left-hand side, this last congruence, follows if we can show
  that, for any integers $n$ and $k$, such that $k > 0$ and $p$ does not
  divide $k$,
  \begin{equation}
    \binom{p^{\lambda} n}{k} \equiv 0 \pmod{p^{\lambda}} .
    \label{eq:binom0k}
  \end{equation}
  Again, this is a direct consequence of Kummer's Theorem~\ref{thm:kummer}.
  Since the last $p$-adic digit of $k$ is nonzero by assumption, the $p$-adic
  addition of $k$ and $p^{\lambda} n - k$ has to involve at least $\lambda$
  carries in order for the sum $p^{\lambda} n$ to end in $\lambda$ many zero
  digits, and \eqref{eq:binom0k} follows.
\end{proof}

\section{Conclusion}

We have extended the congruences $\xi (p m - j) \equiv 0$ modulo $p$ of
Andrews and Sellers \cite{as-fish} as well as Garvan \cite{garvan-fish} to
the case of prime powers. Limited numerical evidence suggests that the
congruences for the Fishburn numbers provided by Theorem~\ref{thm:as:x} are
complete. That is, we expect that every congruence of the form
\begin{equation*}
  \xi ( \alpha m + \beta) \equiv 0 \pmod{\rho},
\end{equation*}
which holds for all integers $m$, is implied by Theorem~\ref{thm:as:x}
together with the Chinese Remainder Theorem. We have numerically verified this
conjecture for all $\beta < \alpha \leq 100$ and $\rho \leq 100$ by
checking that no additional congruences exist.

Finally, we echo the problem posed in \cite{garvan-fish} to investigate
congruences for coefficients related to other quantum modular forms, of which
$q^{1 / 24} F ( q)$, as defined in \eqref{eq:F}, is an instance
\cite{zagier-strange}.

\begin{acknowledgements}
  I am grateful to Frank Garvan for comments on an
earlier version of this paper.
\end{acknowledgements}

% \bibliography{../../math}
% \bibliographystyle{alphaabbr}

\end{document}